\theoremstyle{definition}
\newtheorem{definition}{Definition}[section]
\newtheorem{rem}[definition]{Remark}
\theoremstyle{plain}
\newtheorem{lem}[definition]{Lemma}
\newtheorem{prop}[definition]{Proposition}
\newtheorem{thm}[definition]{Theorem}
\newtheorem{cor}[definition]{Corollary}
\newtheorem{ques}[definition]{Question}
\newtheorem{exam}[definition]{Example}
\newcommand{\RMod}{R\rm{-Mod}}
\newcommand{\Div}{\rm{Div}}
\newcommand{\Tf}{\rm{Tf}}
\newcommand{\ModR}{{\rm Mod-}R}
\newcommand{\Mod}{{\rm Mod}}
\newcommand{\seq}{\subseteq}
\newcommand{\E}{\rm{E}}
\newcommand{\pp}{{\rm{pp}}}
\newcommand{\eq}{\,\dot=\,}
\newcommand{\cal}{\mathcal}
\newcommand{\cL}{\cal L}
\newcommand{\cK}{\cal K}
\newcommand{\cD}{\cal D}
\newcommand{\mtx}{\mathfrak}
\renewcommand{\bar}{\overline}
\newcommand{\br}{\bar}
\renewcommand{\phi}{\varphi}
\renewcommand{\epsilon}{\varepsilon}
\renewcommand{\to}{\longrightarrow}
\newcommand{\Lo}{$\cal L$-\,$\omega$}
\begin{document}

\title{Mittag-Leffler modules and definable subcategories. II}
\author{Philipp Rothmaler}
\date{\today}
\maketitle

This note on countably generated relative Mittag-Leffler modules is a continuation of \cite{MLII}, henceforth referred to as Part I.  Its terminology, preliminaries,  and results are freely used throughout. (In particular, modules are left $R$-modules unless stated otherwise.) All unspecified citations are to that paper. Theorem 7.1(1) states:\\

\emph{The countably generated $\cal K$-Mittag-Leffler modules  are precisely  the uniform $\cal L$-pure images of \Lo-limits.}\\

  Recall from Def.\,7.4, an \Lo-limit 
is an $\omega$-limit of finitely presented modules that is $\cal L$-pure on images. Here, and throughout, $\cal K$ and $\cal L$ are definably dual classes of right, resp., left  $R$-modules in the sense of Def.\,2.5, i.e., they generate mutually elementarily dual definable subcategories, cf.\,Conv.\,2.7. By the main theorem of \cite{habil} (or Part I), the $\cal K$-Mittag-Leffler modules are precisely the $\cal L$-atomic modules. One could therefore replace $\cal K$-Mittag-Leffler by $\cal L$-atomic everywhere, as was done in \cite{Pstrict}.\\
 
 In Theorem 7.1(2) `uniform $\cal L$-pure images' were incorrectly omitted. The resulting discrepancy in the classical case was detected and communicated to me by Jan Trlifaj, for which I am very grateful.

After taking the opportunity to correct the statement of Thm.\,7.1(2), I continue the study of uniform purity of epimorphisms in order to derive the main result, Thm.\,\ref{thm}(2), which states that---provided $R_R\in \langle\cal K\rangle$ (equivalently, $_R\sharp\subseteq\langle \cL\rangle$, the definable subcategory generated by $\cal L$)---every countably generated $\cal K$-Mittag-Leffler module in $\langle \cal L\rangle$  is a direct summand of a $\langle \cal L\rangle$-preenvelope of a union of an $\cal L$-pure $\omega$-chain of  finitely presented modules.
In conclusion I present a number of examples that starts with and grew out of the study of $\cal L$-purity (of monomorphisms in $\Bbb{Z}$-\Mod) for $\cal L=\Div$, the definable subcategory of divisible abelian groups. \\

\section{The corrected Theorem 7.1(2)}

\emph{If $R_R\in \langle\cal K\rangle$,  the countably generated $\cal K$-Mittag-Leffler modules  are precisely  the uniform $\cal L$-pure images of unions of $\cal L$-pure $\omega$-chains of  finitely presented modules.}\\
 
 Deleting the incorrect application of Lemma 5.11---which simply does not apply to `right pure' maps of \S 5.4 (but only to `left pure' maps of \S\S 5.1+2)---the proof given for Thm.\,7.1(2) (in Part I) yields just that.

\section{Uniform purity} This entire section goes back to  \cite[Lemma 3.9]{habil}, where it was proved model-theoretically that countably generated relative Mittag-Leffler modules are relatively pure projective. Here I  reiterate this material in terms introduced in Part I, especially some splitting behavior of relative Mittag-Leffler modules---the terms were coined recently, the proofs are old. 

In \cite[Prop.\,2.5]{Tf} it was proved that every pure epimorphism from a module in $\cal L$ onto a countably generated $\cal L$-atomic module splits. As pointed out there, this result is just a variant of  \cite[Lemma 3.9]{habil}.  Here I present another variant. To make the arguments more transparent, I distinguish three kinds of right $\cal L$-purity, i.e., purity for epimorphisms, and I break up the proof accordingly.

\begin{definition}
\begin{enumerate}[\rm (1)]
 \item  \cite[Def.1.1(2)]{MLIIstrict}. The map $g: B\to C$ is an \emph{$\cal L$-pure epimorphism} if for every tuple $\br c$ in $C$ and every pp formula $\phi$ it satisfies, there is a $g$-preimage $\br b$ in $B$ which satisfies a pp formula $\psi\leq_\cal L\phi$.
\item \cite[Def.\,5.12]{MLII}. The map $g$ is a \emph{uniform $\cal L$-pure epimorphism} if every tuple in $\br c\in C$ has a $g$-preimage $\br b$ such that $\pp_B(\br b) \sim_\cal L \pp_C(\br c)$.
\item The map $g$ is a \emph{strict uniform $\cal L$-pure epimorphism} if for all tuples $\br b\in B$ and $\br c\in C$ with $g(\br b) = \br c$ and $\pp_B(\br b) \sim_\cal L \pp_C(\br c)$,  every element  $c\in C$ has a $g$-preimage $b$ such that $\pp_B(\br b, b) \sim_\cal L \pp_C(\br c, c)$.
\item The prefix $\cL$ is omitted when it is all of $\RMod$.
\item Note that always $\pp_B(\br b) \seq \pp_C(\br c)$ and that
the properties above are an invariant of the definable subcategory generated by $\cL$. 
\item In particular, w.l.o.g.,  $\cL=\langle \cL\rangle$ everywhere in these concepts. 
\end{enumerate}
\end{definition}

 Upon iterating the strict condition one easily sees that \emph{strict} uniform $\cal L$-pure epimorphisms are uniform $\cal L$-pure epimorphisms. In turn, by \cite[Rem.\,2.1]{MLII}, \emph{uniform} $\cal L$-pure epimorphisms are $\cal L$-pure epimorphisms.

\begin{rem}\label{rem}
 \begin{enumerate}[\rm (1)]
 \item If $B\in\cL$ in (1), then $\br b$ in $B$ satisfies the formula $\phi$ itself. Hence every $\cal L$-pure epimorphism $g: B\to C$ is a pure epimorphism (i.e., $\RMod$-pure epic).
 \item If $B\in\cL$  in (2), then $\br b$ in $B$ realizes the entire type $\pp_C(\br c)$, hence $\pp_B(\br b) = \pp_C(\br c)$. Thus every uniform $\cal L$-pure epimorphism $g: B\to C$ is a uniform pure epimorphism (i.e., uniform $\RMod$-pure epic).
\item Similarly, if  $B\in\cL$, then every strict uniform $\cal L$-pure epimorphism $g: B\to C$ is a strict uniform pure epimorphism  (i.e., strictly uniform $\RMod$-pure epic).
 
\end{enumerate}
\end{rem}

Arguments like the following are standard in model theory. This particular one  is  a major  ingredient in the proof of aforementioned \cite[Lemma 3.9]{habil}. Note, $\cL=\RMod$ here.
\begin{lem}\label{split}
Strict uniform epimorphisms onto countably generated modules split.
\end{lem}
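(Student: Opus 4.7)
The plan is to build a section $s\colon C\to B$ of $g$ by inductively lifting a countable set of generators to $B$ in such a way that pp-types are preserved along the way; well-definedness of $s$ will then fall out of pp-type preservation applied to the trivial linear relations.

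First I would enumerate a set of generators $c_0,c_1,c_2,\ldots$ of $C$. Using the strict uniform hypothesis starting from the empty tuples (whose pp-types trivially agree), or equivalently using that strict uniform implies uniform, one picks $b_0\in B$ with $g(b_0)=c_0$ and $\pp_B(b_0)=\pp_C(c_0)$. Then, assuming $b_0,\ldots,b_n$ have been constructed with $g(b_i)=c_i$ and $\pp_B(b_0,\ldots,b_n)=\pp_C(c_0,\ldots,c_n)$, the strict uniform condition applied to the tuples $(b_0,\ldots,b_n)$ and $(c_0,\ldots,c_n)$ and the next element $c_{n+1}$ yields a preimage $b_{n+1}\in B$ with $g(b_{n+1})=c_{n+1}$ and $\pp_B(b_0,\ldots,b_{n+1})=\pp_C(c_0,\ldots,c_{n+1})$. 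This is precisely what the strict uniform condition is designed to give, and it is the only nontrivial ingredient.

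Next I would define $s\colon C\to B$ by $s\!\left(\sum r_i c_{n_i}\right)=\sum r_i b_{n_i}$ on the generators. To see that this is a well-defined $R$-module homomorphism, suppose $\sum r_i c_{n_i}=\sum r'_j c_{m_j}$ in $C$. The equality $\sum r_i x_{n_i}-\sum r'_j x_{m_j}=0$ is a pp formula (in fact, quantifier-free) that the tuple formed from the relevant $c_{n_i},c_{m_j}$ satisfies in $C$; by the equality $\pp_B(b_0,\ldots,b_N)=\pp_C(c_0,\ldots,c_N)$ for $N$ large enough, the corresponding $b$-tuple satisfies it in $B$, so $\sum r_i b_{n_i}=\sum r'_j b_{m_j}$. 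Hence $s$ is well-defined, additive and $R$-linear, and $g\circ s=\mathrm{id}_C$ holds on the generators and therefore everywhere.

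The main obstacle, as always in such splitting arguments, is ensuring simultaneous preservation of pp-types at every finite stage; this is precisely what the strict uniform condition supplies, and the countability of a generating set of $C$ is what allows the inductive construction to exhaust $C$ in $\omega$ steps.
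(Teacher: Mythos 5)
Your proof is correct and follows essentially the same route as the paper's: inductively lift the generators so that $\pp_B(b_0,\dots,b_n)=\pp_C(c_0,\dots,c_n)$ at every stage via the strict uniform condition, then use preservation of the quantifier-free pp formulas expressing linear relations to see that sending $c_i\mapsto b_i$ yields a well-defined homomorphic section. Your write-up merely spells out the well-definedness check for general $R$-linear combinations a little more explicitly than the paper does.
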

\begin{proof} Suppose  $C = \langle c_0, c_1, c_2, \dots\rangle$ and let $g: B\to C$ be  a strict uniform  epimorphism. Starting from the empty tuple, choose a preimage $b_0\in B$ of $c_0$ of same pp type, and continue and successively choose preimages $b_1, b_2, \dots, b_i, \dots$ of $c_1, c_2, \dots, c_i, \dots$, respectively, such that $\pp_B(b_0, b_1, \dots, b_i) = \pp_C(c_0, c_1, \dots, c_i)$ for all $i$. Then the assignment $h(c_i)=b_i$ defines a map, since if $\sum_{i<n} c_i = 0$ in $C$ then $\sum_{i<n} x_i \eq 0\in \pp_C(c_1, c_2, \dots, c_i) = \pp_B(b_1, b_2, \dots, b_i)$, hence $\sum_{i<n} b_i = 0$ in $B$. Applying the same argument to formulas expressing linear dependence shows, $h$ is a homomorphism.
Consequently, $h$ is a section of $g$, this proving that $g$ splits. 
\end{proof}

\begin{lem}\label{uni}
($\cL$-) pure epimorphisms from $\cL$  onto $\cal L$-atomic modules are uniform.
\end{lem}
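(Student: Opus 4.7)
The plan is to reduce everything to a single ``generating'' pp formula, using $\cL$-atomicity of $C$.

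Fix a tuple $\br c$ in $C$. Since $C$ is $\cL$-atomic, its pp type $\pp_C(\br c)$ has (modulo $\cL$) an $\cL$-minimum element: there exists $\phi_0 \in \pp_C(\br c)$ such that $\phi_0 \leq_\cL \phi$ for every $\phi \in \pp_C(\br c)$. This is the key model-theoretic input, and it is the only place where atomicity is used.

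Next I apply the hypothesis. Since $g$ is an $\cL$-pure epimorphism and $\br c$ satisfies $\phi_0$, there is a preimage $\br b \in B$ with $g(\br b) = \br c$ satisfying some pp formula $\psi$ with $\psi \leq_\cL \phi_0$. Because $B \in \cL$, the formula $\psi$ lies in $\pp_B(\br b)$ outright (no passage to $\cL$-equivalence needed here), so this $\psi$ is a genuine member of $\pp_B(\br b)$.

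To finish, I verify $\pp_B(\br b) \sim_\cL \pp_C(\br c)$. One direction, $\pp_B(\br b) \leq_\cL \pp_C(\br c)$, is automatic from the containment $\pp_B(\br b) \seq \pp_C(\br c)$ noted in Def.\,1.1(5): every formula in $\pp_B(\br b)$ already lies in $\pp_C(\br c)$. For the other direction, given any $\phi \in \pp_C(\br c)$, chaining the two inequalities already established yields $\psi \leq_\cL \phi_0 \leq_\cL \phi$, and $\psi \in \pp_B(\br b)$, as desired. Thus the preimage $\br b$ witnesses uniformity for $\br c$, and since $\br c$ was arbitrary, $g$ is a uniform $\cL$-pure epimorphism.

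The argument is entirely routine once $\cL$-atomicity has produced a single formula $\phi_0$ that controls all of $\pp_C(\br c)$ modulo $\cL$; there is no real obstacle, only the conceptual observation that plain $\cL$-purity only promises a preimage per formula, whereas atomicity collapses ``per formula'' to ``for the whole type.''
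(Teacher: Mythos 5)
Your proof is correct and follows essentially the same route as the paper: pick an $\cL$-generator of $\pp_C(\br c)$ (the only use of atomicity), pull it back through the epimorphism, and compare types. One small point of divergence is worth flagging. Your appeal to $B\in\cL$ is actually vacuous --- the preimage $\br b$ satisfies $\psi$ in $B$ by the very definition of $\cL$-pure epimorphism, with no hypothesis on $B$ --- and as a result your argument only delivers $\pp_B(\br b)\sim_\cL\pp_C(\br c)$, i.e.\ uniformity in the $\cL$-sense. The paper's proof uses $B\in\cL$ in earnest: since $\br b$ satisfies a formula that $\cL$-implies every member of $q=\pp_C(\br c)$ and $B$ lies in $\cL$, the tuple $\br b$ realizes all of $q$, giving the on-the-nose equality $\pp_B(\br b)=\pp_C(\br c)$, i.e.\ $g$ is a uniform \emph{pure} epimorphism --- which is the form invoked in the Proposition that follows. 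Your version is not wrong, and the upgrade from $\sim_\cL$ to equality is exactly Remark~\ref{rem}(2), but as written you stop one (easy) step short of what the lemma is used for downstream.
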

\begin{proof}
 Suppose $B\in\cal L$ and  $g: B\to C$ is a pure epimorphism onto the  $\cal L$-atomic module $C$. To prove $g$ is uniform, let $\br c\in C$ and choose an $\cL$-generator   $\phi\in q$. By purity, $\br c$ has a $g$-preimage $\br b$ satisfying $\phi$ in $B$.  As $B\in\cL$, its pp-type, $p:=\pp_B(\br b)$, contains all of $q$. The reverse inclusion follows from $g(\br b)=\br c$. Thus $p=q$, which proves, $g$ is uniform. 
\end{proof}

 \begin{prop}
Suppose, $B\in\cL$ and $C$ is $\cal L$-atomic.

Then every $\cal L$-pure epimorphism $B\to C$ is a strict uniform epimorphism (i.e., strictly uniform $\RMod$-pure epic) and $C\in \langle\cL\rangle$.
\end{prop}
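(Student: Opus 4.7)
My plan is to dispose of the uniformity and of $C\in\langle\cL\rangle$ quickly, and then do the real work to pass from uniform to \emph{strict} uniform; the key technical move will be a linear reshuffling inside the pp-subgroup defined by an $\cL$-generator of the two-term type $\pp_C(\br c,c)$.

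For uniformity: Lemma~\ref{uni} gives that $g$ is a uniform $\cL$-pure epimorphism, and since $B\in\cL$, Remark~\ref{rem}(2) upgrades this to a uniform $\RMod$-pure epimorphism. Concretely, every $\br c\in C$ admits a $g$-preimage $\br b\in B$ with $\pp_B(\br b)=\pp_C(\br c)$. This same equality yields $C\in\langle\cL\rangle$: any pp-implication $\phi\to\psi$ holding throughout $\cL$ transfers to $C$ by pulling $\br c\in\phi(C)$ back to such a $\br b$, using $B\in\cL$ to conclude $B\models\psi(\br b)$, and pushing down along the equality of pp-types.

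For strictness, I fix $\br b\in B$, $\br c\in C$ with $g(\br b)=\br c$ and $\pp_B(\br b)=\pp_C(\br c)$, and let $c\in C$ be given. Using $\cL$-atomicity of $C$, I choose an $\cL$-generator $\phi(\br x,x)$ of $\pp_C(\br c,c)$. Since $\exists y\,\phi(\br x,y)$ lies in $\pp_C(\br c)=\pp_B(\br b)$, some $b_0\in B$ satisfies $\phi(\br b,b_0)$ in $B$. The catch is that $g(b_0)$ need not equal $c$; set $d:=c-g(b_0)$. Now $\phi(\br c,c)$ and $\phi(\br c,g(b_0))$ both hold in $C$ (the latter because $g$ preserves pp-formulas on the tuple $(\br b,b_0)$), so the pp-subgroup property of $\phi$ yields $\phi(0,d)$ in $C$. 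Applying the just-established uniformity to the singleton $d$ produces $e\in B$ with $g(e)=d$ and $\pp_B(e)=\pp_C(d)$, whence $B\models\phi(0,e)$. Setting $b:=b_0+e$ gives $g(b)=g(b_0)+d=c$, while $(\br b,b)=(\br b,b_0)+(0,e)\in\phi(B)$ delivers $\phi(\br b,b)$ in $B$. As $B\in\cL$ and $\phi$ is an $\cL$-generator of $\pp_C(\br c,c)$, this forces $\pp_B(\br b,b)\supseteq\pp_C(\br c,c)$, with the reverse inclusion automatic from $g(\br b,b)=(\br c,c)$.

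The main obstacle, as foreshadowed, is precisely the additive reshuffling in the previous paragraph: the naive preimage $b_0$ witnessing $\phi(\br b,b_0)$ almost never has the correct $g$-image, and one has to pull back the defect $d$ separately and then recombine. The pp-subgroup property of $\phi$ is what makes this recombination legal, and $\cL$-atomicity of $C$ is essential in supplying a single generator $\phi$ to anchor the whole computation; without it one could not pass from the existentially quantified formula in $\pp_B(\br b)$ to a preimage whose pp-type dominates all of $\pp_C(\br c,c)$.
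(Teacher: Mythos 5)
Your proof is correct and follows essentially the same route as the paper's: uniformity via Lemma~\ref{uni} and Remark~\ref{rem}, then the same defect-correction argument for strictness (your $b_0+e$ with $d=c-g(b_0)$ is the paper's $b'-b''$ with $c''=g(b')-c$ up to sign), anchored by an $\cL$-generator of $\pp_C(\br c,c)$ and the pp-subgroup property. The only cosmetic difference is that you verify $C\in\langle\cL\rangle$ by hand, where the paper simply cites closure of definable subcategories under pure epimorphic images.
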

\begin{proof} Let $g: B\to C$ be $\cal L$-pure epic. By the first part of the hypothesis and Rem.\,\ref{rem}(1), $g$ is a pure epimorphism. Together with the second part of the hypothesis and Lemma \ref{uni}, this implies, $g$ is a uniform pure epimorphism. To prove it is strictly so, let  $\br b\in B$ and $\br c\in C$ with $g(\br b) = \br c$ and $\pp_B(\br b) = \pp_C(\br c)$ and pick an element $c\in C$. We have to find a $g$-preimage $b$ of $c$ with $\pp_B(\br b, b) = \pp_C(\br c, c)$. 

 Set  $p := \pp_B(\br b)$,   $q = \pp_C(\br c)$, 
 $q' := \pp_C(\br c, c)$, and choose $\cL$-generators  $\phi=\phi(\br x)\in q$ and
 $\phi' =\phi'(\br x, x)\in q'$.


(One could carry out the argument below entirely on the $\cL$ level to prove $\pp_B(\br b, b) \sim_\cal L \pp_C(\br c, c)$ and then apply Rem.\,\ref{rem}(3) to get equality. However, there is no gain, and we argue simply on the $\RMod$ level.)


 Note, $\exists x\phi' \in q$, hence $\phi\leq_\cL \exists x\phi' $. (Even $\phi\sim_\cL\exists x\phi'$, for, as $q\seq q'$, we have $\phi' =\phi'(\br x, x)\leq_\cL \phi(\br x)$, hence also $\exists x\phi' \leq_\cL \phi$.) 
Accordingly, choose $b'\in B$ with $B\models\phi'(\br b, b')$. Then $C\models\phi'(\br c, g(b'))$, hence $C\models\phi'(\br 0, g(b')-c)$. 


Set $c'':= g(b')-c$. By uniformity, $c''$ has a $g$-preimage  $b''\in B$ of same pp type: $\pp_B(b'') = \pp_C(c'')$. This implies $B\models \phi'(\br 0, b'')$, which, together with $B\models \phi'(\br b, b')$, yields 
$B\models \phi'(\br b, b'-b'')$.

The final claim is that $b:= b'-b''$ is the sought-after element in $B$. Clearly, $g(b) = g(b'-b'')= g(b') - c'' = c$. To see that $p':= \pp_B(\br b, b) = q'$
, note that  $p'\seq q'$ and that both types contain the latter's $\cL$-generator. Invoking $B\in\cL$ once again, this entails $q'\seq p'$, which shows, $g$ is strict uniform. 

Finally, as definable subcategories are preserved under pure epic images, this implies $C\in \langle\cL\rangle$.
\end{proof}

Invoking  Lemma \ref{split}, this yields the following splitting result.

\begin{cor}\label{splitL}
Suppose, $C$ is a countably generated $\cL$-atomic module.

If $C$ is an $\cL$-pure epimorphic image of a module $B\in\cL$, then $C$ is a direct summand of $B$ (and a member of $\langle\cL\rangle$).\qed
\end{cor}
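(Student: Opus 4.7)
The plan is to chain together the Proposition immediately preceding the corollary with Lemma~\ref{split}. Given the hypotheses of the corollary, we have a module $B\in\cL$, an $\cL$-atomic module $C$, and an $\cL$-pure epimorphism $g:B\to C$. These are exactly the hypotheses of the Proposition, so the Proposition applies and produces two conclusions at once: first, that $g$ is a strict uniform pure epimorphism (in the absolute $\RMod$ sense, not merely relative to $\cL$), and second, that $C\in\langle\cL\rangle$, which takes care of the parenthetical part of the statement.

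Next I would feed the strict uniform epimorphism $g$ into Lemma~\ref{split}. Since $C$ is assumed to be countably generated, that lemma yields a section $h:C\to B$ of $g$, i.e., $g\circ h=\mathrm{id}_C$. The existence of such a section is exactly what it means for $C$ to be a direct summand of $B$ (with complement $\ker g$), so the corollary follows.

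There is essentially no obstacle: both ingredients are already in place, and the argument is a two-line assembly. The only thing worth flagging is that one needs \emph{strict} uniformity, not just uniformity, in order to invoke Lemma~\ref{split}; this is precisely what the Proposition delivers, via the back-and-forth argument that extends a pp-equivalent preimage of $\br c$ to a pp-equivalent preimage of $(\br c,c)$. Since Lemma~\ref{split} is stated for strict uniform epimorphisms in the unqualified sense (i.e.\ $\cL=\RMod$), the fact that the Proposition upgrades the conclusion from ``strict uniform $\cL$-pure'' to ``strict uniform pure'' (thanks to $B\in\cL$ and Rem.~\ref{rem}(3)) is exactly what makes the combination go through.
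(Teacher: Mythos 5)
Your proposal is correct and is precisely the paper's argument: the corollary is obtained by feeding the conclusion of the preceding Proposition (that $g$ is a strict uniform epimorphism in the absolute sense and $C\in\langle\cL\rangle$) into Lemma~\ref{split}, which uses countable generation of $C$ to produce a section of $g$. Your remark about why \emph{strict} uniformity (in the $\RMod$ sense) is the needed hypothesis matches the paper's intent exactly.
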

Beware, in the $\cL$-pure chains considered below, $\cL$-purity is a left purity, i.e., a purity for monomorphisms (as opposed to epimorphisms, as considered previously). Here, as usual, $N\subseteq M$ is an $\cL$-pure embedding if for every tuple $\br n$ in  $N$, the pp types of $\br n$ in  $N$ and of $\br n$ in  $M$ are $\cL$-equivalent.

\begin{rem}\label{Lpure}
 Over any ring,  if $M\subseteq N$, then $M$ is $\cL$-pure in $N$ iff the following holds for every tuple $\br a$ in  $M$:  if $\br a$ satisfies a pp formula $\phi$ in $N$, then it also satisfies some pp formula $\psi\leq_{\cL}\phi$ in $M$, see (the proof of) \cite[Lemma 5.3]{MLII}.
\end{rem}

%
%
\begin{rem} 
 \begin{enumerate}[\rm (1)]
 \item By Theorem 7.1(1) (of Part I), every countably generated $\cL$-atomic module is a uniform $\cal L$-pure image of an \Lo-limit.
 If $\cL$ is large enough as to contain every such  \Lo-limit, then, by the corollary, every countably generated $\cL$-atomic module is a direct summand of an \Lo-limit.
 \item Similarly, Theorem 7.1(2), see above, can be improved on as follows in case $\cL$ contains all unions of $\cal L$-pure chains of  finitely presented modules and all absolutely pure modules (the latter is equivalent to $R_R\in \langle\cal K\rangle$).  
 
 Then the countably generated $\cal K$-Mittag-Leffler modules  are precisely  the direct summands of unions of $\cal L$-pure chains of  finitely presented modules.
 \item  This is the case if $\cL=\RMod$. As then the unions in question are direct sums of finitely presented modules, we get back the classical result  from \cite[p.\,74, 2.2.2]{RG}  that countably generated Mittag-Leffler-modules are pure-projective.
 \end{enumerate}
\end{rem}


This suggests  the question posed in Section \ref{When}. 
But before asking more questions I combine these splitting facts with what is known about preenvelopes in definable subcategories.

\section{Enter preenvelopes} 
Next we prove that every countably generated $\cal K$-Mittag-Leffler  module $N$ in $\cL$ is a direct summand of a $\langle\cL\rangle$-preenvelope of some \Lo-limit. 
We know from Theorem 7.1(1) that there is an \Lo-limit $M$ and a uniform $\cL$-pure epimorphism $h: M\twoheadrightarrow N$. Let $\cD= \langle\cL\rangle$ be the definable subcategory generated by $\cL$. By \cite[Corollary 3.5(c)]{RS}), every module $M$  has a  $\cD$-preenvelope, i.e., a map  $\epsilon_M: M\to D_M$, with $D_M\in \cD$, through which every other map from $M$ to a member of $\cD$ factors. Applying this to the  the \Lo-limit $M$, we see that $h$ factors through $\epsilon_M$, which gives us $h_D: D_M\twoheadrightarrow  N$ such that $h=h_D\epsilon_M$. The same simple argument that shows $h_D$ is surjective also proves that it is, as a matter of fact, a  uniform $\cL$-pure epimorphism (remember, morphisms
 preserve pp formulas (and types)!). Thus Cor.\,\ref{splitL} implies that $h_D$ splits, this making $N$ a direct summand of $D_M$, which proves (1) below. (Note for the converse that definable subcategories are closed under direct summands).




\begin{thm}\label{thm} Let $\cD= \langle\cL\rangle$, the definable subcategory generated by $\cL$. \\
Suppose $N$ is a countably generated $\cal K$-Mittag-Leffler (=\,$\cL$-atomic) module.
 
There is a $\cD$-preenvelope $D$ of an \Lo-limit, $M$,  and a uniform $\cL$-pure epimorphism $h_D: D\twoheadrightarrow N$. For any such $D$ and $h_D$ the following holds.

\begin{enumerate}[\rm (1)]
\item $N\in \cD$ if and only if ($h_D$ splits and) $N$ is a direct summand of $D$.
\item If $R_R\in \langle\cal K\rangle$ (equivalently, $_R\sharp\subseteq\cD$), then 
every countably generated $\cal K$-Mittag-Leffler module in $\cD$  is a direct summand of a $\cD$-preenvelope of a union of an $\cal L$-pure $\omega$-chain of  finitely presented modules.
\item If\/ $\cD$ contains $_R\sharp$ and an $\cL$-pure $\omega$-chain of finitely presented modules whose union is $M$, then $M\in\cD$ is its own $\cD$-preenvelope and,  if $N$ is in $\cD$ as well,  $N$ is a direct summand of $M$.
\item If\/ $\cD=\RMod$, then $M$  is a direct sum of  finitely presented modules, hence $N$ is pure-projective. (This is, once again, the aforementioned classical result from \cite{RG}.)
\end{enumerate}
\end{thm}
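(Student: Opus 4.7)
The plan is to make precise the sketch given immediately before the theorem and then specialize it to the three subsequent items. The common backbone is: take an \Lo-limit $M$ with a uniform $\cL$-pure epimorphism $h\colon M\twoheadrightarrow N$ (from Theorem 7.1(1), or from the corrected Theorem 7.1(2) under the extra hypothesis on $R_R$), pass to a $\cD$-preenvelope $\epsilon_M\colon M\to D$ (provided by \cite[Cor.\,3.5(c)]{RS}), use the preenvelope property to factor $h=h_D\circ\epsilon_M$, and then invoke Cor.\,\ref{splitL}.

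The step requiring care is verifying that $h_D\colon D\to N$ is itself a uniform $\cL$-pure epimorphism. Surjectivity is immediate from $h=h_D\epsilon_M$. For uniformity, take any $\br c\in N$; by uniformity of $h$ there is $\br m\in M$ with $h(\br m)=\br c$ and $\pp_M(\br m)\sim_\cL \pp_N(\br c)$. Setting $\br d := \epsilon_M(\br m)$ gives $h_D(\br d)=\br c$, and since morphisms preserve pp formulas, $\pp_M(\br m)\subseteq \pp_D(\br d)\subseteq \pp_N(\br c)$, forcing $\pp_D(\br d)\sim_\cL \pp_N(\br c)$.

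With this in hand, (1) follows at once: since $D\in \cD = \langle\cL\rangle$ (and w.l.o.g., by Def.\,2.1(6), $\cL = \langle\cL\rangle$), Cor.\,\ref{splitL} applied to $h_D$ yields that $h_D$ splits whenever $N\in \cD$, making $N$ a direct summand of $D$; conversely, definable subcategories are closed under direct summands, so any summand of $D\in\cD$ lies in $\cD$. Part (2) repeats the same argument but chooses $M$ via the corrected Theorem 7.1(2), whose hypothesis is precisely $R_R\in\langle\cK\rangle$ (equivalently, $_R\sharp\subseteq\cD$). Part (3) is the further specialization where $M$ itself already lies in $\cD$---guaranteed by the assumption that $\cD$ contains the $\cL$-pure chain together with closure of definable subcategories under its directed union: then the identity on $M$ is a $\cD$-preenvelope, so we may take $D=M$ and (1) delivers $N$ as a direct summand of $M$. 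Finally (4) is the case $\cD=\RMod$ of (3), where every module is in $\cD$ and, by Remark 2.8(3), unions of $\RMod$-pure $\omega$-chains of finitely presented modules are actually direct sums of finitely presented modules, upgrading the conclusion to pure-projectivity.

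The only genuinely new content is the inheritance of uniform $\cL$-purity by $h_D$; everything else is a bookkeeping assembly of Theorem 7.1(1), (2), Cor.\,\ref{splitL}, and \cite[Cor.\,3.5(c)]{RS}. So the main obstacle---insofar as there is one---is the short pp-type sandwich argument in the second paragraph above.
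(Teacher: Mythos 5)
Your proposal is correct and follows essentially the same route as the paper: Theorem 7.1(1) (resp.\ the corrected 7.1(2)), the $\cD$-preenvelope from \cite[Cor.\,3.5(c)]{RS}, the factorization $h=h_D\epsilon_M$, and Cor.\,\ref{splitL}, with (3) and (4) obtained by specialization exactly as in the text. The pp-type sandwich $\pp_M(\br m)\subseteq\pp_D(\br d)\subseteq\pp_N(\br c)$ that you single out is precisely what the paper compresses into the remark that morphisms preserve pp types, so you have merely made the paper's own step explicit.
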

\begin{proof}
(1) was proved above.
For (2), just put (1) together with Theorem 7.1(2) above.
For (3), as definable subcategories are closed under limits,  $M\in \cD$, and it remains to apply (1). Finally,
(4) is a special case of (3).
\end{proof}

\begin{rem}
 There is another interesting module in $\cD$, namely, the direct limit $D_\infty$ of   $\cD$-preenvelopes of  the corresponding finitely presented modules constituting the $\cL$-chain of the \Lo-limit $M$. (Note, by \cite{Pstrict}, 
 these preenvelopes can be taken strict $\cD$-atomic (= $\cal K$-Mittag-Leffler), which does not (seem to) make $D_\infty$, however,  $\cD$-atomic, at least not automatically.) By properties of limits,  there is an epimorphism $h_\infty: M\to D$. As $D_\infty\in\cD$, $h_\infty$ factors through $\epsilon_M$.  But it's not clear (to me) if   $h: M\twoheadrightarrow N$ factors through  $D_\infty$ as well. 
 \end{rem}


\section{When are \Lo-limits in $\cL$?}\label{When}

I have no conclusive answer. All I have is a simple criterion for any direct limit to be in $\langle \cL\rangle$. We need it only for \Lo-limits, so, for simplicity, it is formulated here  only for direct systems of partial order type $\omega$, i.e., for chains.

\begin{rem}
 \begin{enumerate}[\rm (1)]
 \item
 Let for simplicity (and  w.l.o.g.)  $\cL=\langle \cL\rangle$, axiomatized by a collection, $\Psi$,  of pp implications---written as pp pairs $\phi/\psi$, i.e., it is assumed that  $\psi\leq\phi$, while the  inverse implication, $\phi\leq\psi$, (stating  the closing of the pair) is an axiom of $\cL$. Cf.\,\cite{P2}.

Consider an $\omega$-system 
\begin{tikzcd} 
A_0  \arrow[r, "f_0"] 
& A_1 \arrow[r, "f_1"] 
& A_2 \arrow[r, "f_2"] 
& \dots
\end{tikzcd}. Given $i\leq j$, let $f_i^j: A_i \to A_j$ be the corresponding composition of maps.
Suppose $A$ is the limit of this chain.

Then $A\in \cL$ if and only if the following holds for every $\phi/\psi\in\Psi$ and $i<\omega$. 

\begin{center}
 If $a_i\in\phi(A_i)$, then $f_i^j(a_i)\in\psi(A_j)$, for some $j\geq i$.
\end{center}

\item Mutatis mutandis, the same holds for directed systems of any shape.
 \end{enumerate}
\end{rem}

 When constructing the $\cal L$-$\omega$-limit $M_\Phi$ (in \cite[Not.\,7.9]{MLII}) for a given $\cL$-atomic module $N$, what  condition on the $\omega$-system associated with $M_\Phi$  ensures its membership in $\langle \cal L \rangle$?


\section{When all embeddings are $\cL$-pure}

\begin{exam}[Divisible abelian groups, i.e., $R=\Bbb{Z}$]\label{Ab} 
 Let $\cL=\Div$, the class of divisible abelian groups, which constitutes the subcategory defined by the closing of the pp pairs of the form $(x\eq x)/(r|x)$, where $r$ runs over all nonzero integers. 

\begin{enumerate}[\rm (1)]
\item Every embedding of abelian groups is $\Div$-pure:
$\Bbb{Z}$ being an RD ring, every pp formula $\phi$ is equivalent to a finite conjunction of basic RD-formulas, $r|\mtx{b}\br x$, where  $\mtx{b}$ is a row vector of integers. If one of those conjuncts has $r=0$, then that conjunct is quantifier-free and is thus true of\/ $\br n$ in $M$ iff it is true of it in $N$. So, w.l.o.g., all $r\not= 0$ in $\phi$. In that case, those conjuncts define everything in any divisible abelian group. Consequently, $\phi(D)=D^n$ (where $n$ is the number of free variables in $\phi$) in every $D\in\Div$. This means, $\br x\eq \br x\leq_{\Div} \phi$, and therefore the formula $\psi$ in Rem.\,\ref{Lpure} can be taken to be  $\br x\eq \br x$, which is \emph{always} true---this showing that every embedding is $\Div$-pure, as desired.
\item Thus every chain of finitely generated (=\,finitely presented) abelian groups is a $\Div$-chain, hence every union of such is a $\Div$-$\omega$-limit, and hence every countable (=\,countably generated) abelian group is a  $\Div$-$\omega$-limit.
\item Consequently, every countable abelian group is $\Div$-atomic (and itself a  $\Div$-$\omega$-limit---no uniform epis needed).
\item The Pr\"ufer groups are examples of members of   $\Div$ that are $\Div$-$\omega$-limits of finite groups (which are obviously not in  $\Div$).
\item Note, $\cal K$ (the elementary dual of $\cL=\Div$) is the class of torsionfree (=\,flat) abelian groups. So the $\Div$-atomic modules are exactly the $\flat_\Bbb{Z}$-Mittag-Leffler, or simply $\Bbb{Z}$-Mittag-Leffler, abelian groups. 
\item Consequently, every abelian group is $\Bbb{Z}$-Mittag-Leffler, which is a special case of Goodearl's result that every left module over a left noetherian ring $R$ is $R$-Mittag-Leffler (and conversely), see \cite[p.1]{Goo} (or \cite[Cor.\,4.3(2)]{MLII}) and Cor.\,\ref{Goo} below.
 \end{enumerate}
\end{exam}

\begin{exam}[Divisible modules over RD domains, special case]\label{ctbleRD}
 If $R$ is a countable and left noetherian RD domain, the situation is the exact same as over $\Bbb{Z}$, since `finitely generated'\,=\,`finitely presented' and `countable'\,=\,`countably generated'. 
 \end{exam}
 
Recall, a submodule  $N$ of a module $M$ is said to be \texttt{RD-pure} or \texttt{relatively divisible} 
if $rM\cap N = rN$ for every $r\in R$. (This is purity for the pp formulas of the form $r|x$ with $r\in R$.) An \texttt{RD ring} is a ring over which RD-purity (relative divisibility) is (full) purity, or, equivalently, every finitely presented module is a direct summand of a direct sum of cyclically presented modules. A ring is RD iff every pp formula is equivalent to a finite conjunction of  \texttt{basic RD-formulas}, i.e., formulas of the form $r|\mtx{b}x$ with $r\in R$ and $\mtx{b}$ a row vector over $R$ iff  every unary pp formula is equivalent to a finite conjunction of  unary basic RD-formulas $r|sx$ ($r, s\in R$). This property is left-right symmetric. See \cite[\S 2.4.2]{P2} or \cite{PPR} for all this.

The argument in (1) above can be employed to yield a much farther reaching statement.

\begin{prop}\label{all}
Suppose, all embeddings of (left) $R$-modules are $\cL$-pure.
\begin{enumerate}[\rm (1)]
\item  Then all countably presented modules are $\cL$-atomic (= $\cK$-Mittag-Leffler). 
\item If $R$ is, in addition, left noetherian, all  modules are $\cL$-atomic (= $\cK$-Mittag-Leffler). 
\end{enumerate}
\end{prop}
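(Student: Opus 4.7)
The strategy generalizes the pattern of Example \ref{Ab}(1)--(3): under the hypothesis that every embedding is $\cL$-pure, the computation of $\pp$-types in any ambient module can be pulled back to $\pp$-types in finitely presented ``building blocks'', where they are already principal. I handle the two parts separately, though along the same line.

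For (1), I would write a countably presented module $N$ as an $\omega$-direct limit $N=\varinjlim_n N_n$ with each $N_n$ finitely presented and canonical maps $\iota_n\colon N_n\to N$. The image $\iota_n(N_n)$ is a submodule of $N$, so by hypothesis its inclusion into $N$ is $\cL$-pure. This is precisely the condition ``$\cL$-pure on images'' from Def.\,7.4 of Part I, exhibiting $N$ itself as an \Lo-limit. The identity $N\to N$ is trivially a uniform $\cL$-pure epimorphism, so Thm.\,7.1(1) of Part I applies and yields that $N$ is $\cL$-atomic, i.e., $\cK$-Mittag-Leffler.

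For (2), I would argue locally and pointwise. Given an arbitrary module $N$ and a finite tuple $\bar n$ in $N$, set $N_0:=\langle\bar n\rangle$, the submodule generated by $\bar n$. Left noetherianness of $R$ makes $N_0$ finitely presented, so its $\pp$-type $\pp_{N_0}(\bar n)$ is generated by a single $\pp$-formula $\phi$ (read off from a finite free presentation of $N_0$ at the generators $\bar n$). The hypothesis furnishes $\cL$-purity of the inclusion $N_0\hookrightarrow N$, giving $\pp_{N_0}(\bar n)\sim_\cL\pp_N(\bar n)$, whence $\phi$ $\cL$-generates $\pp_N(\bar n)$. Since $\bar n$ was arbitrary, $N$ is $\cL$-atomic.

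The main point calling for care is matching the precise wording of ``$\cL$-pure on images'' in Def.\,7.4 of Part I. Under the natural reading---that the image of each canonical map $\iota_n$ into the colimit is $\cL$-pure there---the hypothesis delivers this immediately; should the condition instead be formulated at the level of the transition maps $N_n\to N_{n+1}$, the same hypothesis still applies, since those images are submodules of the next stage. Beyond that no real obstacle arises: the route via Thm.\,7.1(1) is immediate once $N$ is recognised as an \Lo-limit, and the local argument in (2) is essentially algebraic once $N_0$ is known to be finitely presented.
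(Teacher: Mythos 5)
Your proposal is correct. Part (1) is essentially the paper's own argument: the hypothesis makes every $\omega$-chain of finitely presented modules $\cL$-pure on images, so every countably presented module (being an $\omega$-limit of finitely presented modules) is an \Lo-limit, hence $\cL$-atomic; your detour through the identity map and Theorem 7.1(1) is just a slightly more pedantic way of drawing the same conclusion. Part (2), however, is genuinely different. The paper first shows that every \emph{countably generated} module is an \Lo-limit (union of an ascending chain of finitely generated $=$ finitely presented submodules, all inclusions $\cL$-pure), and then passes from countably generated to arbitrary modules via the Herbera--Trlifaj criterion (Cor.\,11 of Part I) applied to the $\omega$-closed directed system of $\cL$-atomic submodules. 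You instead argue pointwise: for a tuple $\br n$ in an arbitrary $N$, the submodule $\langle\br n\rangle$ is finitely presented by noetherianity, its generating tuple has an (absolutely) principal pp type with generator $\phi$, and $\cL$-purity of $\langle\br n\rangle\hookrightarrow N$ promotes $\phi$ to an $\cL$-generator of $\pp_N(\br n)$ (since $\phi\leq\psi\leq_\cL\chi$ for any $\chi\in\pp_N(\br n)$ and suitable $\psi\in\pp_{\langle\br n\rangle}(\br n)$). This is sound and buys a more elementary, self-contained proof of (2) that bypasses the Herbera--Trlifaj theorem entirely; what it gives up is the structural information the paper's route records along the way, namely that every countably generated module is itself an \Lo-limit with all maps monic. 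Your worry about the exact reading of ``$\cL$-pure on images'' is moot, as you note, since the blanket hypothesis covers either formulation.
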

 \begin{proof}
(1). By hypothesis, every chain of morphisms of (left $R$-) modules is pure on images. Hence, every $\omega$-limit of finitely presented modules, in particular, every countably presented module, is an \Lo-limit and therefore  $\cL$-atomic (= $\cK$-Mittag-Leffler). (See, for example, \cite[Lemma 2.11 (or \S 13.2.2)]{GT} for the fact that countably presented modules are  $\omega$-limits of finitely presented modules.)

(2). Every countably generated module is the union of an ascending chain of finitely generated submodules. Any chain is, by the first hypothesis,  an $\cL$-pure chain. On the other hand, as $R$ is noetherian, finitely generated is the same as finitely presented. Consequently, every countably generated module is an \Lo-limit (even with all maps monic) and thus $\cL$-atomic. Then every module is a directed union of an $\omega$-closed (i.e., closed under unions of countable subchains) system of $\cL$-atomic submodules. By a result of Herbera--Trlifaj, see \cite[Cor.\,11]{MLII}, this suffices to make \emph{all} modules $\cL$-atomic.
 \end{proof}
 
One obtains a classical instance with $\cL=\RMod$ (and $\cK=\ModR$), for which all embeddings of (left) modules are $\cL$-pure (=\,pure) if and only if $R$ is von Neumann regular if and only all embeddings of right modules are $\cK$-pure (=\,pure). As a von Neumann regular ring is one-sided noetherian if and only if it is semisimple artinian, (2) below should come to no surprise. For (1), recall from \cite{RG} (or Thm.\,\ref{thm}(4) above) that countably generated Mittag-Leffler modules are pure projective, hence projective in case they are flat. 

\begin{cor} Let $R$ be von Neumann regular.
\begin{enumerate}[\rm (1)]
\item All countably presented (left or right) $R$-modules are Mittag-Leffler, hence projective.
 \item If $R$ is, in addition, noetherian, all (left or right) $R$-modules are Mittag-Leffler.
 \end{enumerate}
\end{cor}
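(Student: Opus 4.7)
My plan is to apply Proposition \ref{all} after identifying the correct $\cL$. The paragraph immediately preceding the corollary records (and this is classical) that $R$ is von Neumann regular if and only if every embedding of left (equivalently, right) $R$-modules is pure. Taking $\cL=\RMod$ and $\cK=\ModR$ on the appropriate side, the hypothesis of Proposition \ref{all}(1) is satisfied, and its conclusion identifies every countably presented left (resp.\ right) $R$-module as $\RMod$-atomic, which is exactly the classical Mittag-Leffler condition.

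For the ``hence projective'' in (1), I would invoke Theorem \ref{thm}(4), which yields that a countably generated Mittag-Leffler module is pure projective. It then suffices to observe that pure projective equals projective over a vN regular ring, and this is immediate from the fact that every short exact sequence is pure over such a ring: every finitely presented module is flat hence projective, so $\mathrm{Hom}(F,-)$ is exact on all short exact sequences for $F$ finitely presented, i.e., every SES is pure by definition. Consequently every pure projective module is projective, finishing (1).

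For (2), the added noetherian hypothesis places us directly into the hypothesis of Proposition \ref{all}(2), applicable on both sides by the left-right symmetry of vN regularity; its conclusion is exactly the desired statement. No substantive obstacle arises; as the author remarks in passing, a one-sided noetherian vN regular ring is semisimple artinian, so every module is projective and (2) is in fact classical. The only genuinely conceptual step in the whole corollary is the collapse of pure projective to projective in (1), and even this is a routine consequence of vN regularity once one has Theorem \ref{thm}(4) at one's disposal.
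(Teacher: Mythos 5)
Your proposal is correct and follows the same route the paper intends: apply Proposition \ref{all} with $\cL=\RMod$ (all embeddings being pure over a von Neumann regular ring), get pure projectivity of countably generated Mittag-Leffler modules from Theorem \ref{thm}(4), and reduce (2) to the semisimple artinian case. The only cosmetic difference is in the last step of (1): the paper passes from pure projective to projective via flatness of the module (a flat pure projective module is projective), while you observe that every short exact sequence is pure over such a ring so that pure projective and projective coincide outright---both are immediate over a von Neumann regular ring.
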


Next we apply the proposition in order to extend Example \ref{ctbleRD} to arbitrary RD domains. Recall first that over \emph{any} RD ring, on either side, the divisible modules are exactly the absolutely pure modules and the torsion-free modules are exactly the flat modules (\cite[Lemma 2.16]{PPR} or \cite[Prop.2.4.16]{P2}), so $\cL={_R\Div}= {_R\sharp}$ and $\cK=\Tf_R = \flat_R$ (with $\Tf$ the class of torsion-free modules). In particular, $\cK$-Mittag-Leffler is the same as $R$-Mittag-Leffler.   If $R$ is a domain (RD or not), on either side, $\Div$ is a definable subcategory for the same simple reason as over $\Bbb{Z}$, see above; similarly $\Tf$ is one too. 

Remember,  (3) below is a special case of Goodearl's result mentioned in Example \ref{Ab}(6).

\begin{cor}\label{RD}
 Let $R$ be an RD domain and  $\cL={_R\Div}= {_R\sharp}$.
\begin{enumerate}[\rm (1)]
\item  Then all countably presented left $R$-modules are $\cL$-atomic (= $R$-Mittag-Leffler). 
\item  Similarly, for right modules.
\item  If $R$ is, in addition, left noetherian, \emph{all}  modules are $\cL$-atomic (= $R$-Mittag-Leffler). 
\end{enumerate}
\end{cor}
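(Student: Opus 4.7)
The plan is to reduce the entire statement to Proposition \ref{all} by verifying that, over an RD domain, every embedding of (one-sided) $R$-modules is $\cL$-pure for $\cL=\Div$. Once that hypothesis is in place, Prop.\,\ref{all}(1) directly yields (1), the symmetric argument on the right (recall RD is left--right symmetric, and $\Div$ is definable on either side over a domain) yields (2), and part (3) follows from Prop.\,\ref{all}(2) under the extra left noetherian hypothesis.

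To verify $\Div$-purity of an arbitrary embedding $M\subseteq N$, I would mimic Example \ref{Ab}(1) verbatim. Given a tuple $\br a\in M$ satisfying a pp formula $\phi$ in $N$, use the RD hypothesis to rewrite $\phi$ as a finite conjunction $\bigwedge_i (r_i\mid \mtx{b}_i\br x)$ of basic RD-formulas, and split the index set according to whether $r_i=0$. Let $\psi$ be the subconjunction over those $i$ with $r_i=0$: these conjuncts are quantifier-free (they merely assert $\mtx{b}_i\br x=0$) and hence preserved by the inclusion, so $M\models\psi(\br a)$. For the remaining $i$, the domain assumption makes $r_i$ a non-zero-divisor, so on any $D\in\Div$ and any tuple $\br d$ in $D$ the element $\mtx{b}_i\br d\in D$ is divisible by $r_i$; thus $r_i\mid \mtx{b}_i\br x$ holds identically on all of $\Div$, i.e.\ is $\Div$-equivalent to $\br x \eq \br x$. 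Consequently $\psi \sim_{\Div}\phi$, and Rem.\,\ref{Lpure} then certifies that the embedding is $\Div$-pure.

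With the hypothesis of Prop.\,\ref{all} thus established, (1)--(3) are immediate. The only point requiring a bit of care is the joint use of the RD axiomatization (needed to reduce arbitrary pp formulas to basic RD-conjuncts) and the domain hypothesis (needed to keep those conjuncts trivial on $\Div$); dropping either would leave room for a nontrivial pp-constraint to survive on $\Div$ and the ``always satisfied'' step would collapse.
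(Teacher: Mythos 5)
Your proposal is correct and follows exactly the paper's route: the paper's proof consists of the observation that the argument of Example \ref{Ab}(1) (splitting the basic RD-conjuncts according to $r=0$ or $r\neq 0$, the former being quantifier-free and the latter trivially satisfied on $\Div$) uses only that $\Bbb{Z}$ is an RD domain, after which Proposition \ref{all} gives all three parts. Your write-up merely spells out that argument in more detail than the paper does, so there is nothing to add.
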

\begin{proof}
 The proof  in Example \ref{Ab}(1) above that  all embeddings are $\cL$-pure, did in fact use only the fact that the ring  $R=\Bbb{Z}$ was an RD domain.
 \end{proof}
 
 Note that by \cite[Rem.\,5.8]{PPR}, not every left noetherian RD domain is right noetherian, so in contrast to (1) and (2), statement (3) is not left-right symmetric.

We now turn to a broader reason for this behavior of RD domains.

\begin{lem}
 If $R$ is left coherent, all embeddings in $\RMod$ are $_R\sharp$-pure.
\end{lem}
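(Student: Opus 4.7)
The plan is to invoke Remark \ref{Lpure} with $\cL := {_R\sharp}$: given an embedding $M\seq N$, a tuple $\br a$ in $M$, and a pp formula $\phi(\br x)$ with $N\models\phi(\br a)$, I must exhibit a pp formula $\psi\leq_\cL\phi$ such that $M\models\psi(\br a)$. The idea is to take $\psi$ to be the conjunction of all quantifier-free consequences of $\phi$---that is, of all linear equations $\mtx c\br x\eq 0$ implied by $\phi$---and then use left coherence both to ensure this conjunction is (equivalent to) a single pp formula and to verify $\psi\sim_\cL\phi$.

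Via the standard dictionary between pp formulas and pointed finitely presented modules, write $\phi$ as the free realization $(F,\br f)$, with $F$ finitely presented and $\br f$ an $n$-tuple in $F$ satisfying $\phi(M)=\{h(\br f):h\in\mathrm{Hom}(F,M)\}$ for every left $R$-module $M$. The quantifier-free consequences of $\phi$ are exactly the equations $\mtx c\br x\eq 0$ with $\mtx c\in K:=\{\mtx c\in R^n:\mtx c\br f=0\text{ in }F\}$, and these cut out the submodule $\langle\br f\rangle\cong R^n/K\seq F$. Since $R$ is left coherent, the finitely generated submodule $\langle\br f\rangle$ of the finitely presented module $F$ is itself finitely presented; hence $K$ is finitely generated and the stipulated conjunction collapses to a single pp formula $\psi$. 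By construction $\phi\leq\psi$, so $N\models\psi(\br a)$; and since $\psi$ is quantifier-free, this already forces $M\models\psi(\br a)$.

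It remains to verify $\psi\leq_\cL\phi$. On every module $M$ one has the trivial inclusion $\phi(M)\seq\psi(M)$, with equality holding exactly when each homomorphism $\langle\br f\rangle\to M$ extends to $F\to M$, i.e., when $\mathrm{Ext}^1(F/\langle\br f\rangle,M)=0$. By coherence again, $F/\langle\br f\rangle$ is finitely presented, and $M\in{_R\sharp}$ means precisely that $\mathrm{Ext}^1(-,M)$ vanishes on all finitely presented modules. Hence $\phi(M)=\psi(M)$ for every $M\in\cL$, giving $\psi\sim_\cL\phi$ as required. The only load-bearing use of the hypothesis is the passage from finitely generated to finitely presented for $\langle\br f\rangle$; everything else is the routine pp-to-module dictionary together with the definition of FP-injectivity.
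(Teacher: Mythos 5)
Your proof is correct, but it takes a genuinely different route from the paper. The paper's proof is model-theoretic and cites a black box: by \cite[Thm.\,15.41]{P1} (Eklof--Sabbagh/Prest), injective modules over a left coherent ring eliminate quantifiers, so one passes to the injective envelope $\E(N)$, replaces $\phi$ there by a ${_R\sharp}$-equivalent quantifier-free formula $\alpha$, and pulls $\alpha$ back down to $M$. You instead construct the quantifier-free formula by hand: taking a free realization $(F,\br f)$ of $\phi$, you let $\psi$ be the conjunction of the quantifier-free consequences of $\phi$, use coherence (of the module $F$) to see that the relation module $K$ is finitely generated so that $\psi$ is an honest formula, and then verify $\psi\sim_{{_R\sharp}}\phi$ from the characterization of absolutely pure modules as those $M$ with $\mathrm{Ext}^1(-,M)=0$ on finitely presented modules. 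In effect you have reproved, in its pp incarnation, exactly the instance of quantifier elimination the paper imports; your version is self-contained and makes visible where coherence is load-bearing (finite generation of $K$), at the cost of redoing work the literature already packages. Two cosmetic points: the quotient $F/\langle\br f\rangle$ of a finitely presented module by a finitely generated submodule is finitely presented over \emph{any} ring, so your second invocation of coherence is unnecessary; and ``equality holding exactly when $\mathrm{Ext}^1(F/\langle\br f\rangle,M)=0$'' overstates an equivalence where you only need (and only use) the implication from Ext-vanishing to the extension property.
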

\begin{proof}
 By \cite[Thm.\,15.41]{P1}, every injective (left) $R$-module has complete elimination of quantifiers if and only if $R$ is left coherent. (Taking an injective model of the largest theory of injectives shows that that this is equivalent to complete quantifier elimination \emph{universally} for all injectives.) 
 
 Let $M\subseteq N$ be any inclusion  in $\RMod$. To show it is $_R\sharp$-pure, let $\br a$, an arbitrary tuple in $M$, satisfy a certain pp formula $\phi$ in $N$. We must find a formula $\alpha\leq_{_R\sharp}\phi$ that $\br a$ satisfies in $M$. Being existential, $\phi$ holds of $\br a$ also in an injective envelope $\E(N)$ of $N$. But there it is $_R\sharp$-equivalent to a quantifier-free formula $\alpha$. Then $\alpha$ holds of $\br a$   in $\E(N)$, hence, being quantifier-free, also in $M$. 
\end{proof}

\begin{rem}
This phenomenon is a consequence of the classical result of Eklof and Sabbagh that over a left coherent ring the theory of all (left) modules has a model-completion (and conversely). One would expect it to take place, mutatis mutandis, in other classes of modules having a model-completion, cf.\,\cite[\S15.3]{P1}.
\end{rem}

\begin{cor}\label{Goo}
\begin{enumerate}[\rm (1)] 
\item If $R$ is left coherent, all countably presented  left $R$-modules are $\cL$-atomic (= $R$-Mittag-Leffler). 
\item {\rm (Goodearl)} If $R$ is left noetherian, \emph{all}  modules are $\cL$-atomic (= $R$-Mittag-Leffler).
\end{enumerate}
\end{cor}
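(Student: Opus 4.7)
The plan is that this corollary follows by plugging the preceding Lemma into Proposition \ref{all}; there is no new ingredient to produce, only two hypotheses to verify. Throughout we fix $\cL={_R\sharp}$ and its elementary dual $\cK$ (which, in the settings considered, is the class of flat right modules, so that $\cK$-Mittag-Leffler reduces to $R$-Mittag-Leffler in the sense of Raynaud--Gruson).

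For part (1), the hypothesis is that $R$ is left coherent, so the preceding Lemma applies to yield that every embedding in $\RMod$ is ${_R\sharp}$-pure. That is exactly the assumption ``all embeddings of left $R$-modules are $\cL$-pure'' of Proposition \ref{all}. Invoking clause (1) of that proposition, every countably presented left $R$-module is $\cL$-atomic, which, by the main theorem recalled in the introduction, is the same as being $\cK$-Mittag-Leffler, i.e., $R$-Mittag-Leffler.

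For part (2), we first recall that every left noetherian ring is left coherent, so once again the Lemma gives that all embeddings in $\RMod$ are ${_R\sharp}$-pure. Now the stronger clause (2) of Proposition \ref{all} applies—with $\cL={_R\sharp}$ and the noetherian hypothesis in force—and yields that \emph{all} left $R$-modules are $\cL$-atomic, hence $R$-Mittag-Leffler. There is no real obstacle: the technical heart has been packaged into the Lemma (which rests on Eklof--Sabbagh quantifier elimination for injectives over coherent rings) and into Proposition \ref{all} (which in turn rests on the Herbera--Trlifaj closure result used to bootstrap from countably generated to arbitrary modules). The only bookkeeping item worth flagging is the identification $\cK$-Mittag-Leffler $=$ $R$-Mittag-Leffler under the standing conventions; this is immediate once one notes that left coherence ensures the dual of $_R\sharp$ is $\flat_R$.
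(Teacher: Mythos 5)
Your proposal is correct and is precisely the intended argument: the corollary is obtained by feeding the preceding Lemma (all embeddings are $_R\sharp$-pure over a left coherent ring) into Proposition \ref{all}, using for part (2) that left noetherian implies left coherent. The bookkeeping identification of $\cK$-Mittag-Leffler with $R$-Mittag-Leffler for $\cL={_R\sharp}$ is also as the paper intends.
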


\begin{ques}[Converses]  The converse of {\rm (2)} is part of Goodearl's classical result,   \cite[p.1]{Goo} (see \cite[Cor.\,2.7]{habil} or \cite[Cor.\,4.3(2)]{MLII} for a model-theoretic proof). Is the converse of  {\rm (1)} also true? How about the converse of the lemma? Are all inclusions $\cL$-pure if all (countably presented) modules are $\cL$-atomic (cf.\,{\rm Prop.\,\ref{all}})? At least for $\cL={_R\sharp}$?
\end{ques}

For a final application of the proposition I first introduce some terminology.
%

%
%
%

\begin{definition} 
\begin{enumerate}[\rm(1)] 
\item \cite{hilo}. A \texttt{high formula} is a unary pp formula $\gamma$ that defines the entire module in every injective (equivalently, in every absolutely pure) module, i.e., $\gamma(E)=E$ for every injective $E$. 
The collection of all high formulas is denoted by $\Gamma$, with a left or right subscript for the ring, if necessary.
\item A \texttt{left high} ring is a ring over which every unary left pp formula is equivalent to a finite conjunction of high formulas and quantifier-free formulas.
\end{enumerate}
\end{definition}

\begin{rem}
 \begin{enumerate}[\rm(1)] 
 \item Clearly, $\Gamma$ is closed under finite conjunction (and so is the set of qf formulas). Hence over a high ring, every unary pp formula is equivalent to the conjunction of a high formula and a quantifier-free formula.
 \item  High rings figured (without a specific name) in \cite[Cor.\,6.24]{hilo}, where they were shown to admit a decomposition theorem for pure injectives into an `Ulm length 0 module' and a `reduced module,' see there for terminology.
 \item \cite[Rem.\,6.25]{hilo}.
RD-\emph{domains} are two-sided high, as every basic RD-formula is either high or quantifier-free, see \cite[Cor.\,2.12(1)]{hilo}.
\end{enumerate}
\end{rem}

\begin{ques}\label{ques}
  \begin{enumerate}[\rm(1)] 
 \item Is highness of rings left-right symmetric? Presumably, not.
 \item Find examples of high rings other than RD domains.
\end{enumerate} 
\end{ques}

\begin{rem}
The known arguments for (1) (and  Question \ref{hipu}(1) below) to have affirmative answers when high formulas are replaced by RD-formulas do not seem to work here, since highness of a formula $\mtx{A}|\mtx{b}x$ is not given by the  matrix $\mtx{A}$ alone, but by the condition $\mtx{l}(\mtx{A})\subseteq\mtx{l}(\mtx{b})$, \cite[Prop.\,2.8(2]{hilo}.
\end{rem}

Cor.\,\ref{RD} can be generalized to high Warfield rings. Recall from \cite{Pu} (or \cite{PPR} and \cite{P2}) that a ring is called  \texttt{left Warfield} if every finitely presented left module is a direct summand of a direct sum of cyclic finitely presented modules. This is equivalent to saying that every left pp formula (in the variables $\br x$) is equivalent to a finite conjunction of formulas of the form $\mtx{a}\,|\,\mtx{b}\br x$, where $\mtx{a}$ and $\mtx{b}$ are row vectors. (This is a special case of the much more general \cite[Thm.\,2.5]{PPR} or \cite[Cor.\,2.4.3]{P2}.) Clearly, RD rings are left and right Warfield (and, by a result of Puninski, conversely, every two-sided Warfield ring is RD, see the above sources).

\begin{cor}
 Let $R$ be a left high, left Warfield ring and  $\cL={_R\Div}= {_R\sharp}$.
\begin{enumerate}[\rm (1)]
\item  Then all countably presented left $R$-modules are $\cL$-atomic (= $R$-Mittag-Leffler). 
\item  If $R$ is, in addition, left noetherian, \emph{all}  modules are $\cL$-atomic (= $R$-Mittag-Leffler). 
\end{enumerate}
\end{cor}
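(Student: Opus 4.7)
The proof should follow the same pattern as Corollary \ref{RD}: once every embedding $M\subseteq N$ in $\RMod$ is shown to be $\cL$-pure, both (1) and (2) drop out of Proposition \ref{all} (with left noetherianness supplying the extra ingredient for (2)). So the whole task is to verify $\cL$-purity of an arbitrary embedding $M\subseteq N$ under the hypotheses that $R$ is left Warfield and left high.

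To do so, let $\br a$ be a tuple in $M$ satisfying a pp formula $\phi(\br x)$ in $N$; by Remark \ref{Lpure} I must exhibit $\psi\leq_\cL\phi$ witnessed by $\br a$ in $M$. First, using the left Warfield hypothesis, I would reduce to the case $\phi(\br x)=\mtx{a}\,|\,\mtx{b}\br x$ of a single Warfield conjunct (conjunctions of witnesses witness conjunctions). Set $c:=\mtx{b}\br a\in M$ and consider the \emph{unary} pp formula $\gamma(y):=\mtx{a}\,|\,y$, which holds of $c$ in $N$. The left high hypothesis now decomposes $\gamma(y)\equiv h(y)\wedge q(y)$ with $h$ high and $q$ quantifier-free (the conjunction being collapsible to a single pair since both $\Gamma$ and the set of qf formulas are closed under conjunction). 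Since high formulas define the whole of every absolutely pure module, $h\sim_\cL (y\eq y)$, and hence $\gamma(y)\sim_\cL q(y)$. Substituting $y=\mtx{b}\br x$ preserves $\cL$-equivalence, giving $\phi(\br x)\sim_\cL q(\mtx{b}\br x)=:\psi(\br x)$, and in particular $\psi\leq_\cL\phi$.

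It remains to see that $\br a$ satisfies $\psi$ in $M$, i.e., that $q(c)$ holds in $M$. But this is exactly where quantifier-freeness earns its keep: $q(c)$ holds in $N$ by the decomposition of $\gamma$, and being a conjunction of linear equations it descends to the submodule $M$. This completes the proof that the embedding is $\cL$-pure; Proposition \ref{all} then delivers (1), and additionally (2) under the left noetherian hypothesis.

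The main obstacle, such as it is, lies in combining the two structural hypotheses: left Warfield reshapes $\phi$ into a divisibility assertion about a single element, and left highness turns that divisibility, modulo $\cL$, into the absolute datum carried by the qf part $q$. Everything else is bookkeeping---checking that $\cL$-equivalence of unary formulas survives substitution of the linear form $\mtx{b}\br x$ (which it does, since $\cL$-equivalence means equality of solution sets in every $L\in\cL$ and substitution then takes preimages under the same linear map), plus the trivial absoluteness of qf formulas under inclusion.
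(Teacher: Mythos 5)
Your proposal is correct and follows essentially the same route as the paper: use the Warfield hypothesis to reduce $\phi$ to divisibility conjuncts $\mtx{a}\,|\,\mtx{b}\br x$, apply left highness to split the unary formula $\mtx{a}\,|\,y$ into a high part (trivial on all of $\cL={_R\sharp}$) and a quantifier-free part (which descends to the submodule), and conclude $\cL$-purity of all embeddings, whence Proposition \ref{all} gives both statements. The only cosmetic difference is that you treat one conjunct at a time and observe the full $\cL$-equivalence $\phi\sim_\cL\psi$, whereas the paper handles the conjuncts simultaneously and only records the needed inequality $\psi\leq_\cL\phi$.
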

\begin{proof}
We are going to  verify that every embedding is $\cL$-pure. To this end, consider modules $M\subseteq N$ and a tuple $\br a$ in $M$ satisfying a certain pp formula $\phi$ in $N$. All we have to do is find a pp formula $\psi\leq_\cL\phi$ that $\br a$ satisfies in $M$. As $R$ is Warfield, $\phi$ is equivalent to a conjunction of formulas $\phi_i$ ($i<m$) of the form $\mtx{a_i}\,|\,\mtx{b_i}\br x$ with $\mtx{a_i}$ and $\mtx{b_i}$  row vectors. Let $\theta_i$ be the (unary) formula $\mtx{a_i}\,|\,x$, $i<m$. By highness, every $\theta_i$  is equivalent to a conjunction $\gamma_i\wedge\alpha_i$ with $\gamma_i\in\Gamma$ and $\alpha_i$ quantifier-free. Note, $\phi\sim\bigwedge_{i<m} \theta_i(\mtx{b_i}\br x)$. Let $\psi$ be the conjunction of all the $\alpha_i(\mtx{b_i}\br x)$. Clearly $\br a$ satisfies $\psi$ in $N$. As $\psi$ is quantifier-free, it does so also in $M$. It remains to verify $\psi\leq_\cL\phi$. To this end let $L\in\cL= {_R\sharp}$ and $\br c\in \psi(L)$, i.e., $\mtx{b_i}\br c \in\alpha_i(L)$ for all $i$. By highness of $\gamma_i$, we have also $\mtx{b_i}\br c \in L=\gamma_i(L)$, hence  $\mtx{b_i}\br c \in\theta_i(L)$ for all $i$. Consequently, $\br c\in \phi(L)$, as desired.
 \end{proof}

\section{Concluding remarks: high purity}

One may call submodule $M$ of a module $N$ \texttt{high-pure} if $\gamma(N)\cap M = \gamma(M)$ for every $\gamma\in\Gamma$.


 
\begin{lem}  Over domains, high purity implies RD-purity. 

Consequently, over RD-domains (full) purity, high purity and RD-purity are all the same.
\end{lem}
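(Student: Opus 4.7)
The plan is to show that over a domain every basic RD-formula $r\,|\,x$ with $r\ne 0$ is itself high, so that RD-purity is nothing but high-purity restricted to these particular formulas.

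First I would recall the classical fact that over a domain $R$ every injective module $E$ is divisible: given $e\in E$ and $0\ne r\in R$, the map $rR\to E$ sending $rs\mapsto se$ is well-defined because $r$ is not a zero-divisor, and by injectivity of $E$ it extends to $R\to E$, producing an element $e'\in E$ with $re'=e$. Hence $(r\,|\,x)(E)=E$ for every injective $E$, so $\gamma_r:=(r\,|\,x)\in\Gamma$ for every nonzero $r\in R$.

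Next, let $M\seq N$ be high-pure and let $r\in R$ be arbitrary. If $r=0$, then $rN\cap M=0=rM$ holds trivially. If $r\ne 0$, then by Step~1, $\gamma_r\in\Gamma$, so high-purity yields
\[
rN\cap M \;=\; \gamma_r(N)\cap M \;=\; \gamma_r(M) \;=\; rM.
\]
Thus $M$ is RD-pure in $N$, proving the first assertion.

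For the concluding claim, recall that over an RD ring, RD-purity coincides with full purity by definition, while full purity trivially implies high-purity (pp formulas, in particular high ones, are preserved). Together with what has just been shown, this gives the chain of implications
\[
\text{purity} \;\Longrightarrow\; \text{high purity} \;\Longrightarrow\; \text{RD-purity} \;\Longrightarrow\; \text{purity}
\]
over an RD-domain, so the three notions coincide. The only substantive ingredient is the well-known divisibility of injectives over a domain, so no real obstacle is anticipated.
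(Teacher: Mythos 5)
Your proof is correct and follows essentially the same route as the paper: both reduce high purity to RD-purity by observing that over a domain the formula $r|x$ is high for every $r\neq 0$ (and trivial for $r=0$), and then close the circle over RD rings via the definition of RD-purity. The only difference is that you derive the highness of $r|x$ directly from the divisibility of injective modules over a domain, whereas the paper cites the characterization that $r|x$ is high iff $r$ is not a right zero divisor; your self-contained argument is fine (just mind that for left modules one should extend a map from the left ideal $Rr$, so that what is really used is that $r$ is not a right zero divisor).
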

\begin{proof}
 By  \cite[Cor.\,2.11(5)]{hilo}, the formula $r|x$ is high iff $r$ is not a right zero divisor. Over a domain this is true iff $r\not= 0$. For $r=0$, the condition $rM\cap N = rN$ is trivial. For $r\not= 0$, we infer it from high purity. 
 \end{proof}

\begin{ques}
 What can be said about RD-\emph{rings} in general?
\end{ques}
 
\begin{rem}
 As quantifier-free formulas always pass down, over left high rings, high purity is purity. 
 \end{rem}

\begin{ques}\label{hipu}
\begin{enumerate}[\rm (1)]
\item  Is a ring high if high purity is purity?
 \item For full purity, RD-purity, and high purity, respectively, it suffices to inspect unary pp formulas.  What about $\sharp$-purity? For what $\cL$ would the same be true?
 \end{enumerate}
\end{ques}

\end{document}